\numberwithin{equation}{section}
\title{Total flooding time and rumor propagation on graphs}
\author{Darcy Camargo \and Serguei Popov}
\newcommand{\pr}{\mathbb{P}}
\newcommand{\ex}{\mathbb{E}}
\newcommand\laweq{\mathrel{\overset{\makebox[0pt]{\mbox{\tiny law}}}{=}}}
\newtheorem{teo}{Theorem}[section]
\newtheorem{lemma}[teo]{Lemma}
\newtheorem{df}[teo]{Definition}
\newtheorem{prop}[teo]{Proposition}
\newtheorem{coro}[teo]{Corollary}
\newtheorem{openp}[teo]{Open Problem}
\begin{document}
\bibliographystyle{plain}
\maketitle

{\footnotesize  \noindent Department of Statistics, Institute of Mathematics,
 Statistics and Scientific Computation, University of Campinas --
UNICAMP, rua S\'ergio Buarque de Holanda 651,
13083--859, Campinas SP, Brazil\\
\noindent e-mails: \texttt{darcygcamargo@gmail.com, popov@ime.unicamp.br}

}
\begin{abstract}
We study a model of rumor propagation in discrete time where each 
site in the graph has initially a distinct information; we are interested in the number of ``conversations'' before the entire graph knows all informations. This problem can be described as a flooding time problem with multiple liquids. For the complete graph we compare the ratio between the expected propagation time for all informations and the corresponding time for a single information, obtaining the asymptotic ratio $3/2$ between them.
\\[.3cm]\textbf{Keywords:} rumor propagation, flooding time, complete graph, first-passage percolation
\\[.3cm]\textbf{AMS 2000 subject classifications:} 60K30, 60J10.
\end{abstract}
\section{Introduction and results}

The well-known and well-studied model of First-passage percolation is a random process on a graph ${G}=(V,E)$, introduced by Hammersley and Welsh in~\cite{FPP}. It can be described in the following way: Consider a sequence of i.i.d.\ random variables $T_e$ indexed by the edges of the graph, and call~$T_e$ the passage time of edge $e$. For a path $\gamma$ we define the passage time of $\gamma$ as the sum of the passage times of the edges of $\gamma$. Then the passage time from site~$a$ to site~$b$ is the minimum of the path's passage times over all possible paths from~$a$ to~$b$. Usually one is interested in questions related to the passage time of extremal sites of the graph or geodesics with respect to the metric induced by the process.

The notion of flooding time arises from the same process, but we are interested in the time until \emph{all} sites are reached from the initial site by paths. So if we denote the passage time from~$a$ to~$b$ by $\mathcal{L}(a,b)$, then the flooding time from site~$v_0$ is $\sup_{v\in E} \mathcal{L}(v_0,v)$. The usual way to describe the flooding time is that there exists some initial site~$v_0$ with some spreading property, for example an infection source (for infection models) or a water source (justifying the name flooding time). Here we will consider a rumor model in a network scenario, where the site is spreading a particular information it knows. We will assume the passage times along edges to be exponentially distributed with parameter 1, so we can work only with the skeleton discrete process. A more detailed discussion of flooding times can be found in~\cite{flooding}.



The initial works about rumors are by Daley and Kendall in~\cite{DK} and Maki and Thompson in~\cite{MT}, these classical models are considered in a continuous time case similarly to the classical flooding time situation. In these models we usually have three kinds of individuals: ignorants (those that are yet to discover the rumor), spreaders (those that know the rumor and are spreading it) and stiflers (those that know the rumor, but do not spread it). These models and their variations are already well studied, see for example \cite{art1, art2, art3, art4, art5}. Differently from the Daley and Kendall or Maki and Thompson models, here we study the propagation of the rumor in discrete time and with no stiflers, so each individual will always propagate the information over time if possible. 

Our main question is how long does it take to propagate all informations in comparison to the corresponding time for just one information, in particular, in the complete graph with~$n$ sites, denoted by~$K_n$. Its symmetry allows us to obtain the result that the ratio of the expected time to propagate~$n$ informations to the expected time for just one information is asymptotically~$3/2$.

 Let ${G}=(V,E)$ be a graph with site set~$V$ and edge set~$E$, we refer to an edge that connects site~$x$ to site~$y$ as~$e_{xy}$. If $|V|=n$ we denote by~$I_n=\{1,2,\ldots,n\}$ the set of all $n$ informations and by $i_x(t)\subset I_n$ the set of informations known by site $x$ at time $t$. We can construct a vector of subsets of~$I_n$ that describes the configuration of the informations among the sites at time~$t$. Let us denote by~$\mathcal{I}_t=\mathcal{I}_t(V)$ this information vector, so if $V=\{x_1, x_2,\ldots,x_n\}$, then $\mathcal{I}_t=(i_{x_k}(t),\, k=1,2,\dots,n)$.

Now consider a graph ${G}=(V,E)$ with $|V|=n$ and a initial configuration of informations $\mathcal{I}_0$ to construct our discrete time rumor process in the following way.
\begin{enumerate}
\item At time $t$,  select an edge from $E$ uniformly at random, say $e_{xy}$ was selected.
\item Then the sites that are connected by the selected edge share their informations, so $i_x(t+1)=i_y(t+1)=i_x(t) \cup i_y(t)$, and all other sites stay unchanged, that is, $i_z(t+1)=i_z(t)$ for $z \neq x,y$.
\end{enumerate}

We are mostly interested in the expectation of the following two random variables: the first is the time until some specific information $k\in I_n$ becomes known to the entire graph and the second is the time until all informations of~$I_n$ are known by the all the sites of the graph. So let us define the propagation time of a set of informations $H\subset I_n$: 
\begin{equation}
\tau_H:=\inf\{t: H \subset i_x(t) \mbox{ for all } x \mbox{ in } V\},
\end{equation}
 and we also denote by $\tau_{V}:=\tau_{I_{|V|}}$ the total propagation time (i.e., the propagation time for all informations of all sites).

For convenience, we will use $\tau_k$ instead of $\tau_{\{k\}}$. On transitive graphs it holds that~$\tau_k$ and~$\tau_m$ have the same distribution for all $k,m \in I_n$.

Our first result is a bound of the total propagation time on general graphs.

\begin{teo} 
\label{teoG}
Consider a graph ${G}=(V,E)$ with the propagation process starting from the configuration where each site knows a different information, then for any $x$ in $V$ the total propagation time $\tau_V$ satisfies
\begin{equation}
\ex[\tau_V]\leq 2\ex[\tau_x].
\end{equation}
\end{teo}

This result motivates a definition about the ratio of the total propagation time and the propagation time of one information. 
\begin{df}[Propagation ratio]

Consider a propagation process graph ${G}=(V,E)$ with the  with the initial configuration where each site knows a different information. We define the \textit{Propagation ratio} of ${G}$ as value
\[
\mathcal{R}(G)=\frac{\ex\tau_V}{\min \limits_{x \in V} \ex\tau_x}.
\]
\end{df}

By definition we have $\mathcal{R}(G)\geq 1$ for any graph. A example of family of graphs that have propagation ratio asymptotically $1$ is the ring graph. Indeed, we have
\begin{prop} 
 \label{prop_ring}
Let $R_n$ be the graph with vertex set $\{1,\ldots,n\}$
and edge set $\{(1,2),\ldots,(n-1,n), (n,1)\}$.
We have $\ex\tau_V/\ex\tau_1\to 1$ as $n\to\infty$.
\end{prop}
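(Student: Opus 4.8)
The plan is to pass to a continuous-time description, compute $\ex\tau_1$ exactly, and bound $\ex\tau_V$ from above by a quantity of the same leading order. Attach to each edge an independent rate-$1$ Poisson clock and let the two endpoints share their information whenever the clock rings; the ordered sequence of rings is i.i.d.\ uniform on $E$, so this continuous-time dynamics realises exactly the discrete process, and the discrete time $\tau$ equals the number of rings occurring up to the corresponding continuous completion time $T$. Since the total ring process is a rate-$n$ Poisson process and $T$ is a stopping time, optional stopping applied to $N(t)-nt$ gives $\ex\tau = n\,\ex T$ for both $\tau_1$ and $\tau_V$; hence $\ex\tau_V/\ex\tau_1 = \ex T_V/\ex T_1$ and it suffices to work in continuous time. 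Throughout I use the elementary structural fact, proved by induction on the rings, that on $R_n$ the set $i_x(t)$ is always an arc of the ring containing $x$, and likewise the set of sites knowing a fixed information is always an arc.

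First I would compute $\ex T_1$. Tracking information $1$, the complement of the set of informed sites is an arc of some size $g$, starting at $g=n-1$; in every state $g\ge 1$ there are exactly two edges joining an informed to an uninformed site, each ringing at rate $1$, and any such ring decreases $g$ by one. Thus $T_1$ is a sum of $n-1$ independent $\mathrm{Exp}(2)$ waiting times, so $\ex T_1=(n-1)/2$ and $\ex\tau_1 = n(n-1)/2$.

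The core of the argument is an upper bound on $\ex T_V$. Fix a site $v$ and let $C_v$ be the time at which $v$ first knows all informations, so that $T_V=\max_v C_v$. Reading the clocks along the counterclockwise path out of $v$, the information of the site at counterclockwise distance $j$ reaches $v$ along that path at a first-passage time equal to a sum of $j$ independent $\mathrm{Exp}(1)$'s, and these times increase with $j$; the same holds clockwise. Since every site lies within distance $m:=\lceil (n-1)/2\rceil$ of $v$ on at least one side, every information reaches $v$ by time $\max(G^-_v,G^+_v)$, where $G^\pm_v$ are the two first-passage times to distance $m$, each a sum of $m$ independent $\mathrm{Exp}(1)$'s. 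Hence $C_v\le\max(G^-_v,G^+_v)$ and
\[
T_V \;\le\; \max_{v}\max\bigl(G^-_v,G^+_v\bigr).
\]
Each of these $2n$ variables has mean $m\le n/2$ and sub-exponential upper tails. By a union bound over the $2n$ variables together with the Chernoff bound for a sum of $m$ $\mathrm{Exp}(1)$'s, the probability that this maximum exceeds $m+c\sqrt{n\log n}$ is $o(1)$, and the tail beyond this level contributes $o(n)$ to the expectation, yielding $\ex T_V\le n/2+O(\sqrt{n\log n})=(1+o(1))\,n/2$.

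Combining the three steps, $\ex\tau_V = n\,\ex T_V \le (1+o(1))\,n^2/2$, while $\ex\tau_1=n(n-1)/2\sim n^2/2$; together with the trivial bound $\ex\tau_V\ge\ex\tau_1$ coming from $\mathcal{R}(R_n)\ge 1$ and vertex-transitivity, this gives $\ex\tau_V/\ex\tau_1\to 1$. I expect the main obstacle to be the last step: controlling the expectation of the maximum over all $n$ sites so that the fluctuations of the $2n$ first-passage times, which are of order $\sqrt{n}$ individually and correlated across neighbouring sites, inflate the bound only by $o(n)$ rather than by a constant factor. The union bound suffices here precisely because it ignores the correlations, so that only the sub-exponential tails of the individual first-passage times matter.
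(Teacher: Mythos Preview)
Your argument is correct and follows the same strategy the paper sketches: concentration for a sum of i.i.d.\ exponential-type variables together with a union bound over~$n$ quantities. The only difference is cosmetic---the paper stays in discrete time and applies concentration directly to each $\tau_k$, which on the ring is a sum of $n-1$ i.i.d.\ $\mathrm{Geometric}(2/n)$ variables, whereas you pass to continuous time and bound the dual quantities $C_v$ by half-path first-passage times; either decomposition yields the same $O(\sqrt{n\log n})$ fluctuation and hence the ratio~$1$.
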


 By Theorem \ref{teoG} we have that for any graph $G$ we have $\mathcal{R}(G)\leq2$, so a natural question arises from this:  does the total propagation ratio goes to 2 for some graph family? The following proposition answers this question.

\begin{prop} 
\label{propS}
Consider a star graph $S_n=(V_n, E_n)$,with $V_n=\{0,1,\ldots,n\}$,  $E_n=\{(0,x), x=1,2,\ldots,n\}$ and the propagation process starting from the configuration where each site knows a different information, then for any $x$ in $V$ the total propagation time $\tau_V$ satisfies
\begin{equation}
\lim \limits_{n\to\infty} \mathcal{R}(S_n)=2
\end{equation}
\end{prop}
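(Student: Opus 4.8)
The plan is to reduce everything to the coupon collector problem. On the star $S_n$ every edge is incident to the center $0$, so a single step of the process amounts to choosing a leaf uniformly at random among $1,\dots,n$ and letting it and the center merge their information. Writing $T_i$ for the first time edge $(0,i)$ is selected, I would first handle the single-information times. The information initially held by the center reaches every leaf exactly when all $n$ edges have been selected, i.e.\ at the coupon-collector time $\max_i T_i$, so $\ex\tau_0 = nH_n$ with $H_n=\sum_{k=1}^n 1/k$. For the information initially at a leaf $j$, the center learns it at $T_j$ (mean $n$) and, by the strong Markov property at $T_j$, the remaining $n-1$ leaves then have to be contacted once more, costing an extra $nH_{n-1}$ in expectation; hence $\ex\tau_j = n + nH_{n-1}$. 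Comparing $nH_n = nH_{n-1}+1$ with $n+nH_{n-1}$ shows the minimum is attained at the center, so $\min_x \ex\tau_x = nH_n \sim n\ln n$.

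For the total propagation time the key structural observation is that no leaf can know all informations strictly before the center does: a leaf only ever receives information through the center, so it cannot be complete while the center is still missing something. The center becomes complete exactly at the coupon-collector time $\tau_C = \max_i T_i$, when the last edge, say $(0,i^*)$, is selected; at that instant the center and the leaf $i^*$ simultaneously become complete, while each of the other $n-1$ leaves has been contacted only while the center was still incomplete and therefore still lacks some information. From time $\tau_C$ on the center knows everything, so each remaining leaf becomes complete at its first subsequent contact. Applying the strong Markov property at $\tau_C$, the residual time is exactly the time needed to re-contact a fixed set of $n-1$ leaves.

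It remains to evaluate that residual time. Collecting a fixed set of $m$ edges while drawing uniformly among all $n$ edges takes expected time $n\sum_{j=1}^m 1/j$, since when $j$ of the needed edges remain the chance of a useful draw is $j/n$; with $m=n-1$ this gives $nH_{n-1}$. Combining with $\ex\tau_C = nH_n$ yields $\ex\tau_V = nH_n + nH_{n-1}$, and therefore
\[
\mathcal{R}(S_n)=\frac{nH_n+nH_{n-1}}{nH_n}=1+\frac{H_{n-1}}{H_n}\xrightarrow[n\to\infty]{}2,
\]
because $H_{n-1}/H_n\to 1$. I expect the only delicate point to be the structural claim that exactly one leaf is complete at time $\tau_C$ and that the remaining propagation reduces cleanly, via the strong Markov property, to an independent coupon-collector phase for the other $n-1$ leaves; once this is pinned down, the rest of the computation is routine.
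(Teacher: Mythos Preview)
Your proposal is correct and follows essentially the same approach as the paper: decompose $\tau_V$ at the moment the center becomes complete into two coupon-collector phases, the first of expected length $nH_n$ and the second of expected length $nH_{n-1}=nH_n-1$. Your write-up is in fact slightly more careful than the paper's, since you explicitly verify the structural claim that exactly one leaf is complete at time $\tau_C$ and you check that $\ex\tau_0<\ex\tau_j$ for leaves $j$, whereas the paper simply asserts both points.
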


This proposition shows us that the supremum of possible propagation ratios over all graphs is indeed $2$, but it leaves open some interesting problems.

\begin{openp} For any $\alpha \in (1,2)$, does there exist 
a family of graphs  ${G}_{\alpha,n}$ for which the propagation 
ratio converges to $\alpha$ as $n\to\infty$?
\label{openp1}
\end{openp}

\begin{openp} What is the supremum of the propagation ratios over the class of transitive graphs? (Observe that the star graph is
not transitive.)
\label{openp2}
\end{openp}

Also, although we do not have the exact answer for Open Problem~\ref{openp2}, the main result in this work gives a lower bound for this value.

Now, consider the complete graph~$K_n$ with $n$ sites denoted by $\{x_1,x_2,\ldots,x_n\}$.  Let us consider the following two initial scenarios for the distribution of informations. 

Consider the initial scenario $\mathcal{I}_0^1$: Each of the $n$ sites knows a different information ($i_{x_k}(0)=\{k\}$, for $k=1,2,\ldots,n$). In the complete graph, we have by the symmetry that the expected propagation time of a set of informations is a function of the number of individuals that initially know these informations (for the total propagation time this number coincides with the number of informations).

We define the quantity $M_n$ by
\begin{equation}
\label{Mn}
M_n(k)=\ex_{\mathcal{I}_0^1}\tau_{\{1,2,\ldots,k\}},\quad \mbox{for } k=1,2,\ldots,n.
\end{equation}
So, $M_n(k)$ is the expected time for a certain set of $k$ informations to be known by the entire system, given the initial scenario of each site knowing a different information.

Now consider a slightly different initial scenario $\mathcal{I}_0^2$ that will play an important role in our results: This time, for the $n$ sites of the complete graph, we have only $n-1$ informations. The first information is known by two sites and the others are known each by a different site, so $i_1(0)=i_2(0)=\{1\}$ and $i_x(0)=\{x-1\}$ for $x=3,4,\ldots,n$.

Then with the described scenario define the quantity $A_n$ by
\begin{equation}
\label{An}
A_n(k)=\ex_{\mathcal{I}_0^2}\tau_{\{1,2,\ldots,k-1\}}, \quad \mbox{for } k=2,3,\ldots,n.
\end{equation}

Before stating our main results, we recall a result about the propagation time for one information (of course, it is a quite simple fact):

\begin{prop}
\label{prop1}
On the complete graph with $n$ sites, the expected time for one specific information to propagate is
\begin{equation}
M_n(1)=(n-1)\sum \limits_{k=1}^{n-1} \frac{1}{k} \sim n\ln n.
\end{equation}
\end{prop}

Then, our main theorem is

\begin{teo}
\label{mainteo}
On the complete graph $K_n$, the expected total propagation time $M_n(n)$ satisfies
\begin{equation}
\lim \limits_{n \to \infty} \frac{M_n(n)}{M_n(1)}=\frac{3}{2}.
\end{equation}
\end{teo}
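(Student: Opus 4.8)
The plan is to reduce everything to the single--information picture through the identity $\tau_V=\max_{k}\tau_k$ (since all informations must reach all sites), and then to understand why passing to the maximum over the $n$ informations inflates the expected time by the factor $3/2$ rather than leaving it essentially unchanged. The whole effect lives in the upper tail of a single $\tau_k$: its mean is $n\ln n$, but its far tail is exponential, and a maximum of $n$ such tails adds an extra $\tfrac n2\ln n$.

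First I would analyse one fixed information. Starting from $\mathcal I_0^1$, if $j$ sites currently know information $k$, the next (uniformly chosen) edge increases this count exactly when it joins a knower to a non-knower, which has probability $p_j=\tfrac{2j(n-j)}{n(n-1)}$. Hence $\tau_k\laweq\sum_{j=1}^{n-1}G_j$, a sum of independent geometric variables with $\ex G_j=1/p_j$, and this recovers $M_n(1)=(n-1)\sum_{k}1/k\sim n\ln n$ of Proposition~\ref{prop1}. The crucial feature is that the two extreme terms $G_1$ and $G_{n-1}$ have $p_1=p_{n-1}=\tfrac2n$, hence mean $\tfrac n2$ and the heaviest tail $\pr[G_1>s]\approx e^{-2s/n}$, while every other $G_j$ has strictly lighter tail. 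Consequently the upper tail of $\tau_k$ about its mean is governed by these two steps, and I would prove a bound of the form $\pr[\tau_k>n\ln n+s]\le C\,e^{-2s/n}$ uniformly for $s\asymp n\ln n$, for instance by a Chernoff estimate on $\sum_j G_j$ that isolates the two geometrics of success probability $\tfrac2n$ from the exponentially summable remainder.

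The upper bound on $M_n(n)$ is then immediate and needs no independence. Writing $\ex[\tau_V]=\ex[\max_k\tau_k]\le t_0+\int_{t_0}^{\infty}\pr[\max_k\tau_k>t]\,dt\le t_0+n\int_{t_0}^{\infty}\pr[\tau_1>t]\,dt$ and inserting the tail estimate with $t_0=\tfrac32 n\ln n$ gives $n\int_{t_0}^{\infty}e^{-2(t-n\ln n)/n}\,dt=\tfrac{n^2}{2}e^{-\ln n}=\tfrac n2$, negligible next to $t_0$, so $M_n(n)\le\tfrac32 n\ln n\,(1+o(1))$.

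The lower bound is the heart of the matter and the main obstacle, because the $n$ variables $\tau_k$ share the same edge sequence and are far from independent, so a union bound is useless here. The mechanism to exploit is that the extra $\tfrac n2\ln n$ is produced by the maximum, over the $n$ informations, of the essentially independent final geometric steps $G_{n-1}^{(k)}$, the time for the last straggler site to receive information $k$: each is geometric with success probability $\tfrac2n$, distinct informations have distinct last sites with high probability, and two such final steps overlap only through one common edge, so a second--moment or coupling argument should yield $\max_k G_{n-1}^{(k)}\ge\tfrac n2\ln n\,(1-o(1))$. To place this cleanly on top of the bulk I would show that the time $\sigma_k$ for information $k$ to reach $n-1$ sites concentrates at $n\ln n$ uniformly in $k$, with $\min_k\sigma_k\ge n\ln n\,(1-o(1))$, so that $\tau_V\ge\sigma_{k^\ast}+G_{n-1}^{(k^\ast)}$ at the maximising $k^\ast$ already reaches $\tfrac32 n\ln n\,(1-o(1))$. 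This is exactly where the auxiliary quantities $M_n(k)$ and $A_n(k)$ enter: they allow a finite--$n$ recursion in the number of tracked informations, the scenario--$\mathcal I_0^2$ quantity $A_n(k)$ encoding the state right after one tracked information has been duplicated onto a second site, and thereby let one control the dependence between informations that the crude tail estimate cannot see. Combining the matching bounds yields $M_n(n)=\tfrac32 n\ln n\,(1+o(1))$, and dividing by $M_n(1)\sim n\ln n$ gives the limit $3/2$.
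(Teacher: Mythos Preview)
Your upper-bound argument does not give the constant $3/2$. The tail estimate $\pr[\tau_k>n\ln n+s]\le C\,e^{-2s/n}$ with an absolute constant $C$ is false, precisely because \emph{two} of the steps, $G_1$ and $G_{n-1}$, share the minimal success probability $2/n$. The sum of two independent geometrics with parameter $2/n$ has tail of order $(1+2s/n)\,e^{-2s/n}$, so at $s=\tfrac12 n\ln n$ one gets $\pr[\tau_k>\tfrac32 n\ln n]\asymp(\ln n)/n$, and the union bound over the $n$ informations yields a quantity of order $\ln n$, not $o(1)$. If you carry the integration through with the correct tail $(1+2s/n)e^{-2s/n}$, the remainder $n\int_{t_0}^\infty\pr[\tau_1>t]\,dt$ is of order $\tfrac12 n\ln n$, and you recover only $M_n(n)\le 2\,n\ln n\,(1+o(1))$, i.e.\ the trivial bound of Theorem~\ref{teoG}, not $3/2$. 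A Chernoff bound cannot rescue this: you must take the tilt $\theta<2/n$, and optimising leaves an unavoidable $(\ln n)^2$ prefactor that again spoils the constant. The paper's upper bound avoids the union bound entirely: it uses the time-reversal identity $Y_x\laweq\tau_x$ (the time for one fixed \emph{site} to learn everything equals in law the time for one fixed \emph{information} to spread) to write $\tau_V=Y_x+S_x$ with $\ex Y_x=M_n(1)$, and then bounds $\ex S_x$ by conditioning on the random rank of $x$ among sites ordered by when they become fully informed; this yields $\ex S_x\le\tfrac12 M_n(1)$ exactly.

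Your lower-bound sketch is closer in spirit but remains a sketch: you assert that the $G_{n-1}^{(k)}$ are ``essentially independent'' and that $\min_k\sigma_k\ge n\ln n\,(1-o(1))$, but you give no mechanism for decoupling the last-step variables (they live on the same edge process) and you do not actually use the $M_n(k)$, $A_n(k)$ recursion you mention. The paper's lower bound is purely algebraic: from the one-step decomposition of Lemma~\ref{lemmaMA} and the interlacing $A_n(k+1)\ge M_n(k-1)$, $A_n(k)\ge M_n(k-2)$ one obtains a telescoping inequality which sums to $M_n(n)\ge\tfrac32 M_n(1)-\tfrac34(n-1)$, with no probabilistic tail analysis at all.
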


So, using this result and Proposition~\ref{prop1}, we obtain as a corollary the asymptotic behavior of $M_n(n)$:
\begin{equation}
\label{asymK} \quad M_n(n) \sim \frac{3}{2}n\ln n.
\end{equation}

As a consequence,  we obtain that being $\mathcal{T}$ the class of transitive graphs, we have 
\[
\sup \limits_{G\in \mathcal{T}} \mathcal{R}(G) \geq \frac{3}{2}.
\]

We conclude the introduction by transcribing~\eqref{asymK} into the flooding time version (i.e., in continuous time). 

\begin{coro} Let $\tau_{flood}^{({G})}$ be the maximum of all the flooding times in graph ${G}=(V,E)$ from all possible distinct sources, i.e. $\tau_{flood}^{({G})}=\max \limits_{v \in V} \tau_{flood}^v$. Then we have
\begin{align*}
\ex\left[\tau_{flood}^{(K_n)}\right]&\sim \frac{3}{n}\ln n.
\end{align*}
\end{coro}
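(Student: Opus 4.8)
The plan is to reduce this continuous-time statement to the discrete-time result $M_n(n)\sim\frac32 n\ln n$ (which follows from Theorem~\ref{mainteo} together with Proposition~\ref{prop1}), by exploiting the standard correspondence between first-passage percolation with $\mathrm{Exp}(1)$ edge weights and the embedded jump chain of a system of independent rate-$1$ Poisson edge clocks.

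First I would identify the left-hand side with a continuous total propagation time. Run the continuous process by attaching to every edge an independent Poisson clock of rate $1$; whenever a clock rings, the two endpoints exchange all the information they currently hold. By the lack of memory of the exponential law, the residual clock of each edge is again $\mathrm{Exp}(1)$ at every fixed time, so the set of sites that know the information originally placed at a site $a$ grows exactly as a first-passage percolation cluster with $\mathrm{Exp}(1)$ weights; consequently that information reaches a site $b$ precisely at the passage time $\mathcal{L}(a,b)$ (computed in the common clock realization). Starting from the configuration in which each site holds a distinct information, the whole system is fully informed at time $\max_{a}\sup_{b}\mathcal{L}(a,b)=\max_{a,b}\mathcal{L}(a,b)$, which is exactly $\tau_{flood}^{({G})}=\max_{v}\tau_{flood}^{v}$. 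Hence $\ex[\tau_{flood}^{(K_n)}]$ equals the expected \emph{continuous} total propagation time.

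Next I would pass from continuous to discrete time. The superposition of the $|E|$ independent rate-$1$ clocks is a rate-$|E|$ Poisson process, so its successive ring times are separated by i.i.d.\ waiting times $W_i\sim\mathrm{Exp}(|E|)$, and the edge that fires at each ring is uniform and independent of the $W_i$. The discrete process of the paper is precisely this sequence of rings, so the number of rings needed to inform everyone is the discrete total propagation time $\tau_V$, which depends only on the fired edges and is therefore independent of $(W_i)_{i\ge1}$. Writing the continuous total propagation time as $\sum_{i=1}^{\tau_V}W_i$ and applying Wald's identity (legitimate since $\ex[\tau_V]=M_n(n)<\infty$ by Theorem~\ref{teoG}) yields
\begin{equation}
\ex\bigl[\tau_{flood}^{(K_n)}\bigr]=\frac{\ex[\tau_V]}{|E|}=\frac{M_n(n)}{\binom{n}{2}}.
\end{equation}

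Finally I would insert the asymptotics. By Theorem~\ref{mainteo} and Proposition~\ref{prop1} we have $M_n(n)\sim\frac32 n\ln n$, while $\binom{n}{2}=\frac{n(n-1)}{2}\sim\frac{n^2}{2}$, whence
\begin{equation}
\ex\bigl[\tau_{flood}^{(K_n)}\bigr]\sim\frac{\tfrac32 n\ln n}{n^2/2}=\frac{3}{n}\ln n,
\end{equation}
as claimed. The only genuinely delicate points are the memorylessness argument identifying $\tau_{flood}^{({G})}$ with the continuous total propagation time and the independence of $\tau_V$ from the waiting times needed for Wald's identity; once these are in place the statement is an immediate substitution.
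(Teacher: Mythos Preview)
Your argument is correct and is exactly the intended derivation: the paper does not spell out a proof of this corollary but simply presents it as the continuous-time transcription of \eqref{asymK}, and what you have written is precisely the standard Poisson-clock/Wald identity computation that turns $M_n(n)\sim\frac32 n\ln n$ into $\ex[\tau_{flood}^{(K_n)}]=M_n(n)/\binom{n}{2}\sim\frac{3}{n}\ln n$. There is nothing to add.
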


\section{Preliminary results}
As a warm-up, we first prove the three propositions
from the previous section.

\begin{proof}[Proof of Proposition~\ref{prop_ring}]
The proof is standard, so we give only a sketch.
 This follows from the usual concentration argument:
since~$\tau_1$ is a sum of i.i.d.\ (Geometric) random variables,
it deviates from its mean by a linear amount with exponentially
small probability. Therefore, the maximum of~$n$ 
identically distributed propagation times has essentially the same
asymptotic behaviour as one propagation time.  
\end{proof}

\begin{proof}[Proof of Proposition~\ref{propS}]
From now on, we denote the harmonic sum by $H(m)=\sum_{j=1}^m j^{-1}$. We first focus on the informations reaching the central site of the star graph (site $0$). As each edge from the graph must be chosen at least once, then this is a coupon collector problem with $n$ coupons and so this expected time will be $n H(n)$. So, after all informations reach $0$, the last information will be known only by two sites and needs to be propagated by the central site. Also, all other informations will be propagated together, as the central site knows all of them. Then we again fall into a coupon collector problem, but as now initially we have one site besides the central one knowing the last information, then we already have one coupon and must collect other $n-1$. So, for any $x\neq 0$ we have:
\[
\ex\tau_V=n H(n) +(nH(n)-1)=2nH(n)-1
\]

As in the star graph the minimal propagation time is from the central site information, we have again a coupon collector problem with $\ex \tau_0 =nH(n)$ and so:

\[
\mathcal{R}(S_n)=\frac{\ex\tau_V}{\ex\tau_0}=2-\frac{1}{nH(n)}\to2,\quad \mbox{as $n\to \infty$}
\]

which concludes the proof.
\end{proof}

\begin{proof}[Proof of Proposition~\ref{prop1}]
Let us focus on a  specific information $x$ in our graph. Suppose we have exactly  $k$ sites knowing $x$. Then, selecting an edge that connects a site that knows $x$ to a site that do not know $x$ will lead to a situation where $k+1$ sites know $x$,
and there $k(n-k)$ edges with this property; otherwise, the number of sites that know~$x$ 
remains unchanged.
So, we change the system from $k$ sites knowing $x$ to $k+1$ sites knowing $x$ with probability $\frac{k(n-k)}{{n \choose 2}}=\frac{2k(n-k)}{n(n-1)}$ at each attempt. Now we use the representation $\tau_x=\sum_{k=1}^{n-1} \Delta_k$, where $\Delta_k$ is the time, with $k$ sites knowing~$x$, for the information be shared with a new site that does not know $x$. Since $\Delta_k \sim Geometric\left(\frac{2k(n-k)}{n(n-1)}\right)$, we write
\begin{align}
\label{decom} M_n(1)&=\ex[\tau_x]=\sum \limits_{k=1}^{n-1} \ex\left[\Delta_k\right]\\
\nonumber &=\sum \limits_{k=1}^{n-1} \frac{n(n-1)}{2k(n-k)}\\
\nonumber &=\frac{(n-1)}{2}\sum \limits_{k=1}^{n-1} \left[ \frac{1}{k}+\frac{1}{n-k} \right]\\
\nonumber &=(n-1)H(n-1)\sim n \ln n,
\end{align}
and this concludes the proof.
\end{proof}

Now we present here some lemmas  that we need in order to prove our two main results: Theorems~\ref{teoG} and~\ref{mainteo}.

By \eqref{Mn} and \eqref{An} we can use a simple coupling argument to obtain monotonicity of $M_n$ and $A_n$:
\begin{equation}
\label{monoM}
M_n(1)\leq M_n(2) \leq M_n(3) \leq \ldots \leq M_n(n-1) \leq M_n(n)
\end{equation}
and 
\begin{equation}
\label{monoA}
A_n(2)\leq A_n(3) \leq A_n(4) \leq \ldots \leq A_n(n-1) \leq A_n(n).
\end{equation}

Now we study the situation where each site starts with one different information. First we will consider the complete graph with the quantities~$A_n$ and $M_n$ defined in $\eqref{An}$ and $\eqref{Mn}$. The next lemmas will help us put both~$A_n$ and $M_n$ in order.
\begin{lemma} For $k=1,2,\ldots,n-1$ we have
\[
A_n(k+1)\leq M_n(k)
\]
and for $k=1,2,\ldots,n-2$ we have
\[
M_n(k)\leq A_n(k+2).
\]
\end{lemma}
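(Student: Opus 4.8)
The plan is to reduce everything to a single structural observation: the spread of an individual information is an autonomous process that depends only on the edge-selection sequence and on the set of sites initially holding that information, and \emph{not} on any of the other informations. Concretely, writing $K_j(t)=\{x : j\in i_x(t)\}$ for the set of sites knowing information $j$ at time $t$, the update rule gives $K_j(t+1)=K_j(t)\cup\{x,y\}$ whenever the selected edge $e_{xy}$ meets $K_j(t)$, and $K_j(t+1)=K_j(t)$ otherwise; this recursion never refers to the other informations. Consequently, if $\tau_j$ denotes the filling time of information $j$ (the first time $K_j(t)=V$), then $\tau_{\{1,\dots,m\}}=\max_{j\le m}\tau_j$, and I may analyse all the relevant $\tau_j$ on one common edge sequence.

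For the first inequality $A_n(k+1)\le M_n(k)$ I will couple the two initial scenarios on the same edge sequence. Using the symmetry of $K_n$, relabel the sources in $\mathcal{I}_0^1$ so that informations $1,\dots,k$ start at sites $2,\dots,k+1$; then the $\mathcal{I}_0^2$ configuration for these same informations is identical except that information $1$ additionally sits at site $1$. Hence $K_j^{(2)}(0)\supseteq K_j^{(1)}(0)$ for every $j\le k$, and I will check that this inclusion is preserved by the common dynamics: if $K_j^{(2)}(t)\supseteq K_j^{(1)}(t)$ and $e_{xy}$ is selected, then $\{x,y\}$ is added to $K_j$ in scenario $2$ whenever it is added in scenario $1$, so the inclusion persists. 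Therefore $\tau_j^{(2)}\le\tau_j^{(1)}$ pointwise for each $j$, whence $\max_{j\le k}\tau_j^{(2)}\le\max_{j\le k}\tau_j^{(1)}$, and taking expectations gives the claim.

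For the second inequality $M_n(k)\le A_n(k+2)$ I will argue in the opposite direction. In $\mathcal{I}_0^2$ we have $A_n(k+2)=\ex[\max_{j\le k+1}\tau_j^{(2)}]$; dropping the contribution of information $1$ only decreases the maximum, so $A_n(k+2)\ge\ex[\max_{2\le j\le k+1}\tau_j^{(2)}]$. The informations $2,\dots,k+1$ each start at a single distinct site, so by the autonomy of the individual spreads together with the symmetry of $K_n$ (the expected filling time depends only on the number of single-site sources, as already noted for $M_n$), we get $\ex[\max_{2\le j\le k+1}\tau_j^{(2)}]=M_n(k)$. Combining the two displays yields $M_n(k)\le A_n(k+2)$.

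The genuinely delicate point is the autonomy observation paired with the pathwise monotonicity used in the first inequality: one must verify that enlarging the source set of one information on a fixed edge sequence never slows any information down, and that set-inclusion is an invariant of the coupled dynamics. Once this monotonicity and the reduction $\tau_{\{1,\dots,m\}}=\max_j\tau_j$ are in place, both inequalities are immediate and, pleasantly, no explicit evaluation of the individual $\tau_j$ is required.
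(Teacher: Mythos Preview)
Your argument is correct and is essentially the same as the paper's own proof. Both proofs couple the two initial scenarios on a common edge sequence and use that each information's spread is autonomous and monotone in its initial source set; your relabeling of informations $1,\dots,k$ to start at sites $2,\dots,k+1$ is exactly the paper's comparison of $\tau_{\{2,\dots,k+1\}}^{\mathcal{I}_0^1}$ with $\tau_{\{1,\dots,k\}}^{\mathcal{I}_0^2}$, and your ``drop information $1$ from the maximum'' step is the paper's inclusion $\{2,\dots,k+1\}\subset\{1,\dots,k+1\}$ inside scenario $\mathcal{I}_0^2$.
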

 
\begin{proof}

Let us consider the complete graph $K_n$ with two simultaneous propagation processes coupled by using the same edges choices. Process 1 and process 2 will have the initial scenarios $\mathcal{I}_0^1$ and $\mathcal{I}_0^2$ respectively. Because of our coupling, we can state that the propagation of $\{2,3,\ldots,k+1\}$ in process~1 implies the propagation of informations $\{1,2,\ldots,k\}$ in process~2. So we have
\[
\tau_{\{2,3,\ldots,k+1\}}^{\mathcal{I}_0^1} \geq \tau_{\{1,2,\ldots,k\}}^{\mathcal{I}_0^2}.
\]
Taking expectations in both sides gives the first result.

 For the second result we just need to observe that since ${\{2,3,\ldots,k+1\}}\subset {\{1,2,\ldots,k+1\}}$, we have
\[
\tau_{\{2,3,\ldots,k+1\}}^{\mathcal{I}_0^2}\leq \tau_{\{1,2,\ldots,k+1\}}^{\mathcal{I}_0^2}
\]
Again taking expectation in both sides we obtain the second result.
\end{proof}

The above lemma implies that
\begin{equation}
\label{order}
A_n(2)\leq M_n(1) \leq A_n(3) \leq M_n(2) \leq \ldots A_n(n) \leq M_n(n-1) \leq M_n(n).
\end{equation}

Now we state our main tool for the proofs of our asymptotic result.
\begin{lemma}
\label{lemmaMA}
For $2 \leq k \leq n-1$ it holds that
\begin{equation*}
\frac{(2n-k-1)k}{2} M_n(k)={n \choose 2} +k(n-k) A_n(k+1)+{k \choose 2}A_n(k).
\end{equation*}

\end{lemma}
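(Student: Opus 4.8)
The plan is to derive the identity by a first-step (one-step) analysis of the process started from $\mathcal{I}_0^1$, while tracking the set of informations $\{1,\dots,k\}$. Call a site \emph{relevant} if it initially holds one of the informations $1,\dots,k$; in $\mathcal{I}_0^1$ there are exactly $k$ relevant sites and $n-k$ irrelevant ones. I would condition on the first edge selected, partitioning the ${n\choose 2}$ equally likely edges into three types according to how many endpoints are relevant: both irrelevant (there are ${n-k\choose 2}$ such edges), exactly one relevant ($k(n-k)$ edges), or both relevant (${k\choose 2}$ edges). These three counts sum to ${n\choose 2}$, and since each step consumes one unit of time, $M_n(k)$ equals $1$ plus the probability-weighted expected remaining times in the three cases. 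The hypothesis $2\le k\le n-1$ is exactly what is needed for all three edge types to be possible and for $A_n(k)$ and $A_n(k+1)$ to be defined.

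The heart of the argument is to identify, using the full symmetry (exchangeability of sites and of informations) of $K_n$, each post-first-step configuration with one of the quantities already named. If both endpoints are irrelevant, the informations $1,\dots,k$ are each still held by a single distinct site, so the state is distributionally identical to $\mathcal{I}_0^1$ and the remaining expected time is again $M_n(k)$. If exactly one endpoint is relevant, one of the informations $1,\dots,k$ becomes known to two sites while the rest remain singly held, and there are now $k+1$ relevant sites and $n-k-1$ irrelevant ones; this matches scenario $\mathcal{I}_0^2$ for the set $\{1,\dots,k\}$, giving remaining time $A_n(k+1)$. The subtle case is when both endpoints are relevant: two of the tracked informations, say $i$ and $i'$, now sit together at the same pair of sites, and since they henceforth always propagate jointly they behave as a single ``super-information'' held by two sites. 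Treating $\{i,i'\}$ as one unit, the state has $k-1$ information units (one doubled, the remaining $k-2$ singly held) among $k$ relevant sites and $n-k$ irrelevant ones, which is exactly scenario $\mathcal{I}_0^2$ for $\tau_{\{1,\dots,k-1\}}$, i.e.\ remaining time $A_n(k)$.

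Collecting the three cases yields the linear equation
\begin{equation*}
M_n(k)=1+\frac{{n-k\choose 2}}{{n\choose 2}}M_n(k)+\frac{k(n-k)}{{n\choose 2}}A_n(k+1)+\frac{{k\choose 2}}{{n\choose 2}}A_n(k),
\end{equation*}
and the remaining work is routine: move the $M_n(k)$ term to the left, multiply through by ${n\choose 2}$, and simplify using the identity ${n\choose 2}-{n-k\choose 2}=\frac{(2n-k-1)k}{2}$. The main obstacle is not the algebra but justifying the three equivalences rigorously---especially the ``super-information'' reduction in the both-relevant case, where one must argue that the merged pair $\{i,i'\}$ can never again be separated (any site that later learns $i$ or $i'$ learns both), so that the subsequent dynamics coincide \emph{in law} with those governing $A_n(k)$. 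The other two reductions follow directly from the exchangeability of sites and informations in the complete graph.
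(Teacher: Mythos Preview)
Your proposal is correct and follows essentially the same approach as the paper: a first-step analysis on $\mathcal{I}_0^1$ with the same three-way partition of edges and the same identifications of the post-step expected times as $M_n(k)$, $A_n(k+1)$, and $A_n(k)$, including the ``merge two informations into one'' trick in the both-relevant case. Your write-up is in fact slightly more careful than the paper's in justifying the reductions via exchangeability and the permanence of the merged pair.
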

\begin{proof} Let us, similarly to the proof of Proposition~\ref{prop1}, analyze the different outcomes in the choice of the first edge of the propagation process. Consider the initial scenario~$\mathcal{I}_0^1$ and let us look only at the first $k$ informations, that is, in the beginning of the process we have $k$ different informations in $k$ distinct sites, and no other informations of interest in any remaining sites. So when we choose one of the ${n \choose 2}$ edges, there are three possible outcomes:
\begin{itemize}
\item We can choose an edge that links two sites with no informations of interest. In this case the configuration of the informations do not alter. We have ${n-k \choose 2}$ edges of this kind. Because the informations remain the same, we have the same expectation $M_n(k)$ of the remaining time as before the choice.
\item We can choose an edge that links a site with a information of interest to a site with no informations. In this case we will have a specific information known by two sites. We have $k(n-k)$ edges of this kind. After the choice, we have a information known by two sites and $k-1$ informations known by just one site, then our remaining time until all sites know all informations is $A_n(k+1)$.
\item We can choose a edge that link two sites with informations of interest. In this case they will share their informations and know both informations. We have ${k \choose 2}$ edges of this kind. The trick here is that since only those two sites will know these informations, they will propagate always together and so we can consider both as just one information, so the expected remaining time until everyone knows all informations is $A_n(k)$.
\end{itemize}
With this we can derive the following relation between $A_n$ and $M_n$:
\begin{align*}
&M_n(k)=1+\frac{{n-k \choose 2}}{{n \choose 2}}M_n(k)+\frac{k(n-k)}{{n \choose 2}}A_n(k+1)+\frac{{k \choose 2}}{{n \choose 2}}A_n(k), 
\intertext{which can be rewritten as}
&\frac{(2n-k-1)k}{2} M_n(k)={n \choose 2} +k(n-k)A_n(k+1)+{k \choose 2}A_n(k).
\end{align*}
This concludes the  proof of Lemma~\ref{lemmaMA}.
\end{proof}

Lemma \ref{lemmaMA} allows us to associate the quantities~$M_n$ and~$A_n$,
and is a key element for working with $M_n(n)$. 
The last lemma needed in order to prove Theorem~\ref{mainteo} is presented now. The decomposition of the total propagation time used in its proof is also the key to proving Theorem~\ref{teoG}.

\begin{lemma} 
\label{lemmareverse}
Consider the rumor propagation process on a graph ${G}=(V,E)$ with initial configuration $\mathcal{I}_0^1$ and for $x_k$ in $V$ with initial information $k$ define the stopping time
\[
Y_k=\inf\left\{t>0: i_{x_k}(t)=\{1,2,\ldots,|V|\}\right\}.
\]
So $Y_k$ is the time until site $x_k$ knows all informations. Then we have
\[
Y_k\laweq \tau_k,
\]
and then as a consequence, in the complete graph $K_n$:
\begin{equation}
\ex[Y_k]= M_n(1).
\end{equation}

\begin{proof}
Let us denote by $\Gamma_k$ the set of finite sequences of edges $\gamma=(e_1,e_2,\ldots,e_j)$ of arbitrary size with the following property  denoted by \textit{Property~1}: If the propagation process follows this edges order, then information $k$ (initially known only by site $x_k$) becomes propagated among all sites. This property can be rigorously defined as: $\gamma \in \Gamma_k$ if and only if for each $y\neq x_k$ in $V$ there exist a subsequence $\gamma_y$ of~$\gamma$ forming a path from $x_k$ to $y$.

Since we choose our edges uniformly in the process, we can say that the probability of a certain sequence $\gamma \in \Gamma_x$ of length $\ell(\gamma)$ is
\[
\mathbb{P}[\gamma]=\left(\frac{1}{|E|}\right)^{\ell(\gamma)},
\]
which only depends on the length of the sequence and the number of edges. 

We define the property of a sequence $\gamma$ taking all informations to $x_k$ and denote it by \textit{Property 2}: for all $y\neq x_k$ in $V$ there exist a subsequence $\gamma_y^*$ forming a path from $y$ to $x_k$. 

Then, for any $\gamma=e_1e_2\ldots e_{\ell(\gamma)} \in \Gamma_k$, if we revert the order of our choices, i.e., create the sequence~$\gamma'=e_{l(\gamma)}e_{{l(\gamma)}-1}\ldots e_1$, then all paths from $x_k$ to $y$ formed by subsequences becomes paths from $y$ to $x_k$, then, if reverted, a sequence with \textit{Property 1} becomes a sequence with \textit{Property 2}. 

\begin{figure}[h]
\centering
\includegraphics[width=15cm]{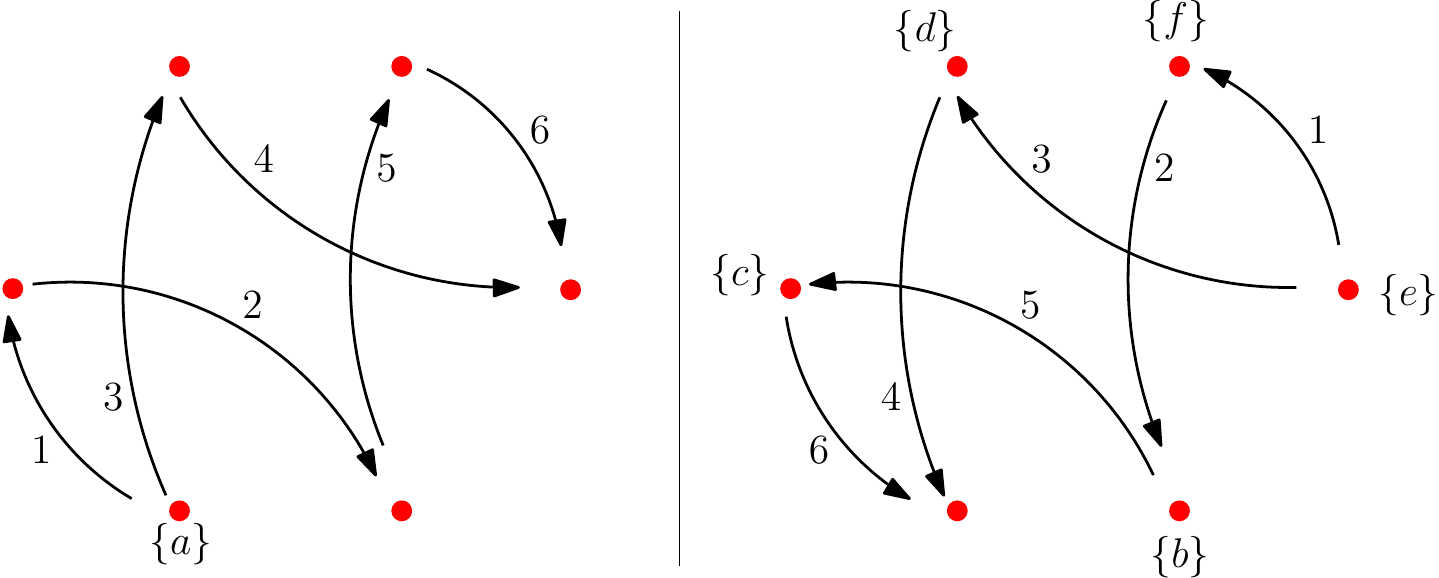}
\caption{In the left a sequence of edges with \textit{Property~1}  and in the right the reverse sequence of edges with \textit{Property~2}. Only the edges choices that really propagates the information are labeled by their choice order.}
\label{fig:pic4}
\end{figure}

As the sequence probability only depends on the sequence size, we can get the following equivalence:
\begin{align*}
\pr [\tau_1 \leq k]&= \sum \limits_{\gamma \in \Gamma, |\gamma|=k} \pr[\gamma]= \sum \limits_{\gamma \in \Gamma, |\gamma|=k}\pr[\gamma']=\pr[Y_x \leq k].
\end{align*}
This concludes the proof of the equality in distribution.
\end{proof}

\end{lemma}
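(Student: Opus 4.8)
The plan is to establish the distributional identity $Y_k\laweq\tau_k$ by a time-reversal argument on the sequence of edge selections, and then read off the expectation from Proposition~\ref{prop1}. First I would encode the whole randomness of the process as an infinite sequence of i.i.d.\ uniformly chosen edges $e_1,e_2,\ldots$, and observe that both events $\{\tau_k\le t\}$ and $\{Y_k\le t\}$ depend only on the finite prefix $(e_1,\ldots,e_t)$.

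The next step is to give a purely combinatorial description of these two events in terms of \emph{temporal paths}: information initially at a site $a$ reaches a site $b$ within time $t$ precisely when there exist times $1\le t_1<t_2<\cdots<t_m\le t$ and a path $a=v_0,v_1,\ldots,v_m=b$ with $e_{t_i}=e_{v_{i-1}v_i}$. Consequently $\{\tau_k\le t\}$ is the event that the prefix has \emph{Property 1} (for every $y\neq x_k$ there is a temporal path from $x_k$ to $y$), while $\{Y_k\le t\}$ is the event that it has \emph{Property 2} (for every $y\neq x_k$ there is a temporal path from $y$ to $x_k$).

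The heart of the argument is the reversal map $\gamma=(e_1,\ldots,e_t)\mapsto\gamma'=(e_t,\ldots,e_1)$. Reversing the order of the edges turns an increasing chain $t_1<\cdots<t_m$ realizing a temporal path from $x_k$ to $y$ into the chain $t+1-t_m<\cdots<t+1-t_1$ realizing the same path traversed backwards, that is, a temporal path from $y$ to $x_k$ in $\gamma'$. Hence $\gamma$ has Property 1 if and only if $\gamma'$ has Property 2, so reversal is a bijection between the length-$t$ sequences with Property 1 and those with Property 2. Because the edges are chosen i.i.d.\ uniformly, every length-$t$ sequence carries the same probability $|E|^{-t}$, so $\pr[\gamma]=\pr[\gamma']$ for each $\gamma$. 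Summing over the bijection then yields
\[
\pr[\tau_k\le t]=\sum_{\substack{\gamma:\,|\gamma|=t\\ \text{Property 1}}}\pr[\gamma]=\sum_{\substack{\gamma:\,|\gamma|=t\\ \text{Property 1}}}\pr[\gamma']=\pr[Y_k\le t]
\]
for every $t$, which is the claimed equality in law.

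Finally, on $K_n$ the graph is transitive, so $\tau_k\laweq\tau_1$ and $\ex[\tau_1]=M_n(1)$ by Proposition~\ref{prop1}; combined with $Y_k\laweq\tau_k$ this gives $\ex[Y_k]=M_n(1)$. I expect the only genuinely delicate point to be making the temporal-path characterization precise and checking that reversal exactly interchanges forward and backward reachability while preserving the strict time-ordering; once that bijection is in hand, the probability computation is routine bookkeeping.
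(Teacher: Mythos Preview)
Your proposal is correct and follows essentially the same time-reversal argument as the paper: both encode the process as an i.i.d.\ sequence of edge choices, characterize $\{\tau_k\le t\}$ and $\{Y_k\le t\}$ via temporal paths (the paper's \textit{Property~1} and \textit{Property~2}), observe that reversing the sequence bijects one to the other, and conclude from the fact that each length-$t$ sequence has probability $|E|^{-t}$. Your write-up is in fact a bit more explicit about the bijection and the preservation of strict time ordering, but the approach is identical.
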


\section{Proofs of the main results}

\subsection{Proof of Theorem~\ref{mainteo}}
We prove Theorem~\ref{mainteo} by obtaining an upper and a lower bound for $M_n(n)$ that have the same asymptotic behavior. 

\begin{lemma}
\label{lower}
 For $n\geq4$ we have
\[
M_n(n) \geq \frac{3}{2}M_n(1)-\frac{3}{4}(n-1).
\]
\end{lemma}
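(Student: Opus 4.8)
The plan is to reduce the statement to a lower bound on the increment $M_n(n)-M_n(1)$ and to extract it from Lemma~\ref{lemmaMA} by feeding in the lower bounds coming from the ordering \eqref{order}. Concretely, in the identity of Lemma~\ref{lemmaMA} I would replace $A_n(k+1)$ and $A_n(k)$ by the smaller quantities $M_n(k-1)$ and $M_n(k-2)$, which is legitimate by \eqref{order} for $3\le k\le n-1$. Writing $c_k=\tfrac{(2n-k-1)k}{2}=k(n-k)+\binom{k}{2}$, this turns the exact relation into the recursive inequality
\[
c_k M_n(k)\ \ge\ \binom{n}{2}+k(n-k)M_n(k-1)+\binom{k}{2}M_n(k-2),\qquad 3\le k\le n-1 .
\]

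Next I would pass to the increments $d_k:=M_n(k)-M_n(k-1)$, which are nonnegative by the monotonicity \eqref{monoM}. Since $M_n(k)-M_n(k-2)=d_k+d_{k-1}$, the left-hand side above equals $c_k d_k+\binom{k}{2}d_{k-1}$, so the inequality becomes $c_k d_k+\binom{k}{2}d_{k-1}\ge\binom{n}{2}$, or after dividing by $k/2$,
\[
(2n-k-1)\,d_k+(k-1)\,d_{k-1}\ \ge\ \frac{n(n-1)}{k}.
\]
The key step is now to sum this over $k=3,\dots,n-1$: the coefficient of each interior $d_k$ (for $3\le k\le n-2$) is $(2n-k-1)+k=2n-1$, a constant independent of $k$, while $\sum_{k=3}^{n-1}\tfrac{n(n-1)}{k}=n(n-1)\bigl(H(n-1)-\tfrac32\bigr)$. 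Hence the telescoping collapses to
\[
(2n-1)\bigl(M_n(n-2)-M_n(2)\bigr)+n\,d_{n-1}+2\,d_2\ \ge\ n(n-1)\Bigl(H(n-1)-\tfrac32\Bigr).
\]

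Finally I would write $M_n(n-2)-M_n(2)=\bigl(M_n(n)-M_n(1)\bigr)-d_2-d_{n-1}-d_n$; after substitution the remaining increments $d_2,d_{n-1},d_n$ enter with a definite (nonpositive) sign and may be discarded by \eqref{monoM}, which leaves $(2n-1)\bigl(M_n(n)-M_n(1)\bigr)\ge n(n-1)\bigl(H(n-1)-\tfrac32\bigr)$. Using $\tfrac{n}{2n-1}\ge\tfrac12$ together with $H(n-1)\ge H(3)=\tfrac{11}{6}>\tfrac32$ for $n\ge4$ (so that the right-hand side is nonnegative and the factor may be lowered to $\tfrac12$), and recalling $M_n(1)=(n-1)H(n-1)$ from Proposition~\ref{prop1}, this gives $M_n(n)-M_n(1)\ge \tfrac12 M_n(1)-\tfrac34(n-1)$, which is exactly the claim, and it also explains the hypothesis $n\ge4$.

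The point I expect to be delicate is precisely the emergence of the constant $\tfrac32$. Bounding the recursion term by term, or summing it with the wrong weights, only produces weaker ratios such as $\tfrac54$ or $\tfrac43$, because such estimates double-count the coupling between consecutive increments. The device that avoids this loss is the normalization by $2/k$, which makes the two coefficients $(2n-k-1)$ and $(k-1)$ add up to the constant $2n-1$ after the index shift, so that a plain weight-one summation already telescopes cleanly; verifying that the leftover boundary terms carry the right sign to be dropped is the only remaining check.
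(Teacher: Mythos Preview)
Your proposal is correct and follows essentially the same route as the paper: both start from Lemma~\ref{lemmaMA}, replace $A_n(k+1)$ and $A_n(k)$ by $M_n(k-1)$ and $M_n(k-2)$ via \eqref{order}, pass to increments $d_k=M_n(k)-M_n(k-1)$, observe that the two coefficients $(2n-k-1)$ and $(k-1)$ combine to the constant $2n-1$, sum over $3\le k\le n-1$, and discard the (nonpositive) boundary contributions to reach $(2n-1)\bigl(M_n(n)-M_n(1)\bigr)\ge n(n-1)\bigl(H(n-1)-\tfrac32\bigr)$. The paper packages the telescoping via $b_k=k\,d_k$ rather than by computing the interior coefficient directly, but the argument and all the key inequalities are the same.
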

\begin{proof}

 From Lemma~\ref{lemmaMA} we have
\[
\frac{(2n-k-1)k}{2}M_n(k)={n \choose 2} +k(n-k)A_n(k+1)+{k \choose 2}A_n(k).
\]
We can rewrite this as
\begin{align*}
\frac{n(n-1)}{k} =2(n-k) \left[M_n(k)-A_n(k+1)\right]+(k-1)\left[M_n(k)-A_n(k)\right].
\end{align*}
By $\eqref{order}$ we have $A_n(k+1) \geq M_n(k-1)$ and $A_n(k) \geq M_n(k-2)$, then
\begin{align}
\frac{n(n-1)}{k} \leq& 2(n-k) \left[M_n(k)-M_n(k-1)\right]+(k-1)  \left[M_n(k)-M_n(k-2)\right]&\nonumber\\
&=2(n-k) \left[M_n(k)-M_n(k-1)\right]+(k-1) \left[M_n(k)-M_n(k-1)\right]&\nonumber\\
&+(k-1) \left[M_n(k-1)-M_n(k-2)\right]&\nonumber\\
&=(2n-1) \left[M_n(k)-M_n(k-1)\right]-k \left[M_n(k)-M_n(k-1)\right]&\nonumber\\
&\label{eqntele}+(k-1) \left[M_n(k-1)-M_n(k-2)\right].&
\end{align}
Now denote $b_k=k \left[M_n(k)-M_n(k-1)\right]$, so we can rewrite $\eqref{eqntele}$ as
\begin{equation}
\label{eqntele2}
\frac{n(n-1)}{k} \leq (2n-1) \left[M_n(k)-M_n(k-1)\right]-b_k+b_{k-1}.
\end{equation}
Summing $\eqref{eqntele2}$ in $k$ from $3$ to $n-1$ we have:
\begin{align*}
n(n-1)&\Big(H(n-1)-\frac{3}{2}\Big) \\
&\leq (2n-1) \left[M_n(n-1)-M_n(2)\right]-b_{n-1}+b_{2} \\
&= nM_n(n-1)+(n-1)M_n(n-2)-(2n-3)M_n(2)-2M_n(1)\\
&\leq (2n-1)\left[M_n(n)-M_n(1)\right] \\
&\leq  2n\left[M_n(n)-M_n(1)\right].
\end{align*}
But $n(n-1)H(n-1)=nM_n(1)$, using this in the above inequality and dividing by~$n$ we have
\begin{align*}
(n-1)H(n-1)-\frac{3}{2}(n-1)&=M_n(1)-\frac{3}{2}(n-1) \\
&\leq 2\left[M_n(n)-M_n(1)\right],
\end{align*}
{and then}
\[
 M_n(n) \geq \frac{3}{2}M_n(1)-\frac{3}{4}(n-1).
\]
This concludes the  proof of the lower bound.
\end{proof}

Lemma \ref{lower} gives us a lower bound to the ratio~$M_n(n)/M_n(1)$, which shows us that this ratio is asymptotically greater than or equal to~$3/2$. We want to get a asymptotic equality for this ratio, so we need a upper bound for~$M_n(n)/M_n(1)$ that converges to~$3/2$. 

\begin{lemma} We have
\label{upper}
\[
M_n(n) \leq \frac{3}{2}M_n(1).
\]
\end{lemma}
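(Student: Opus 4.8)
The plan is to mirror the proof of Lemma~\ref{lower}, but pushing the inequalities in the opposite direction. The lower bound was obtained by feeding $A_n(k+1)\ge M_n(k-1)$ and $A_n(k)\ge M_n(k-2)$ (the right half of the chain~\eqref{order}) into the rewritten form of Lemma~\ref{lemmaMA}; for an upper bound I would instead use the left half, $A_n(k+1)\le M_n(k)$ and $A_n(k)\le M_n(k-1)$. The one extra ingredient is a boundary identity at the top: starting from $\mathcal{I}_0^1$ every edge joins two singletons, so after one step we are exactly in scenario $\mathcal{I}_0^2$, the merged pair playing the role of the doubled information (those two informations thereafter travel together, as in the third bullet of the proof of Lemma~\ref{lemmaMA}). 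Hence $M_n(n)=1+A_n(n)$, and together with $A_n(n)\le M_n(n-1)$ this reduces matters to controlling $M_n(n-1)$.

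Concretely, I would take the rewritten identity $\frac{n(n-1)}{k}=2(n-k)[M_n(k)-A_n(k+1)]+(k-1)[M_n(k)-A_n(k)]$ from the proof of Lemma~\ref{lower}, substitute $A_n(k+1)\le M_n(k)$ and $A_n(k)\le M_n(k-1)$, telescope over $k$ while tracking $b_k=k[M_n(k)-M_n(k-1)]$ exactly as before, and finally combine monotonicity~\eqref{monoM} with $M_n(n)=1+A_n(n)$ to isolate $M_n(n)$. If the bookkeeping reproduced the same leading coefficient $(2n-1)$ that governs Lemma~\ref{lower}, the factor $\tfrac32$ would drop out with no error term, matching the clean statement.

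The hard part is that this substitution is far too lossy. Writing $d_k=M_n(k)-M_n(k-1)$, the bound $A_n(k+1)\le M_n(k)$ only gives $M_n(k)-A_n(k+1)\ge0$, so all the identity yields is the per-step estimate $\binom{k}{2}d_k\le\binom{n}{2}$, i.e.\ $d_k\le\frac{n(n-1)}{k(k-1)}$; summing this produces merely $M_n(n)\le M_n(1)+(n-1)^2$, which overshoots by a factor of order $n/\log n$. The loss comes from discarding the term $2(n-k)[M_n(k)-A_n(k+1)]$, whose coefficient $2(n-k)\approx 2n$ is precisely what powered the lower-bound telescoping. So the genuine content is a quantitative \emph{lower} bound on the head-start saving $M_n(k)-A_n(k+1)$, and the chain~\eqref{order} bounds it only from above; this is the obstacle I expect to dominate the proof.

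To supply that bound I would use a feature special to $K_n$: since a chosen edge copies \emph{every} information in both directions, the set of sites holding a fixed information~$j$ evolves as a function of the edge sequence alone, independently of the other informations. Thus $\tau_V=\max_j T_j$, where $T_j$ is the spreading time of information~$j$ and $T_j\laweq\tau_1$ (the content of Lemma~\ref{lemmareverse}), so the statement becomes the extreme-value inequality $\ex\max_j T_j\le\tfrac32 M_n(1)$. I would estimate it by $\ex\max_j T_j\le t_0+n\,\ex[(T_1-t_0)^+]$ for a well-chosen $t_0$, using the representation $T_1=\sum_{i=1}^{n-1}\Delta_i$ into independent geometric jumps to control the upper tail. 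The delicate point, and the source of the exact constant, is that the excess $\tfrac12 M_n(1)\approx\tfrac12 n\ln n$ over the mean is created by a single anomalously slow jump of the bottleneck information (the jumps $\Delta_i$ with $n-i$ small, each of mean $\approx n/2$ and tail $\pr(\Delta_i>s)\approx e^{-2s/n}$), so that $\pr\!\big(T_1>\tfrac32 M_n(1)\big)\approx c/n$; the union bound over the $n$ informations then balances at exactly the level $\tfrac32 M_n(1)$, and making this tail estimate precise enough to avoid inflating the constant is where the real work sits.
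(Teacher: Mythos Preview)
Your proposal does not reach a proof, and it misses the paper's actual argument, which is structurally different from both of your attempts.

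You correctly diagnose that mirroring Lemma~\ref{lower} fails: replacing $A_n(k+1)\le M_n(k)$ kills the $2(n-k)$ term and leaves only $\binom{k}{2}d_k\le\binom{n}{2}$, which is far too weak. Your fallback---writing $\tau_V=\max_j T_j$ and attacking it with $\ex\max_j T_j\le t_0+n\,\ex[(T_1-t_0)^+]$---is a reasonable heuristic for the asymptotic constant, but the lemma claims the \emph{exact} inequality $M_n(n)\le\tfrac32 M_n(1)$ for every $n$, with no error term. A union-bound/tail computation on a sum of geometrics is not going to land on that clean constant without slack; you acknowledge as much when you say the ``real work sits'' in not inflating the constant. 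That is not a detail to be filled in---it is the whole difficulty, and there is no indication the method can close it. (Incidentally, $T_j\laweq\tau_1$ is just symmetry of $K_n$; Lemma~\ref{lemmareverse} is the different statement $Y_k\laweq\tau_k$.)

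The paper's proof uses neither the $A_n/M_n$ recursion nor a tail bound. It decomposes $\tau_V=Y_x+S_x$ for a fixed site $x$, where $Y_x$ is the first time $x$ knows everything and $S_x$ is the remainder. Lemma~\ref{lemmareverse} gives $\ex Y_x=M_n(1)$ exactly, so the whole task is $\ex S_x\le\tfrac12 M_n(1)$. The key idea you are missing is to introduce a random ranking $\sigma$ of the sites by the time they become fully informed (breaking ties by a fair coin), so that by symmetry $\sigma(x)$ is uniform on $\{1,\dots,n\}$. Conditionally on $\sigma(x)=k$, at time $Y_x$ at least $k$ sites already know everything; erasing the partial knowledge of the others and coupling with a single information already held by $k$ sites gives
\[
\ex[S_x\mid\sigma(x)=k]\ \le\ \sum_{j=k}^{n-1}\ex\Delta_j\ =\ \frac{n-1}{2}\bigl(H(n-1)-H(k-1)+H(n-k)\bigr).
\]
Averaging over $k$ uniform on $\{1,\dots,n\}$ makes the $H(n-k)$ and $H(k-1)$ sums telescope exactly, yielding $\ex S_x\le\tfrac12 M_n(1)$ with no error term. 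This averaging over the random rank is the device that produces the exact $\tfrac32$; nothing in your proposal substitutes for it.
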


\begin{proof}
We begin by creating a random ordering of the sites of the complete graph depending on the edges sequence of the propagation process with initial configuration $\mathcal{I}_0^1$. To define the ordering we will use the random variable $N(t)=|\{x\in V \mid i_x(t)=I_n\}|$, i.e. the number of sites that know all informations in time $t$. We wanted to construct our ordering $\sigma$ according to the time the sites come to know all informations, but it does not work so easily since typically a lot of pairs of sites discover their last information at the same time; so we would have pairs of sites $x\neq y$ with $\sigma(x)=\sigma(y)$. To avoid this problems when this happens we just toss a coin to uniformly select between them.

 We now define $\sigma$ formally. For this we recall from Lemma~\ref{lemmareverse} that   $Y_x=\inf\{t\geq 1 \mid i_x(t)=I_n\}$ is the first time $x$ knows all informations and consider a sequence $\{B_x\}_{x\in V}$ of i.i.d.\  Bernoulli
random variables with parameter $p=1/2$. Then
\[
L_x=\left\{\begin{tabular}{lll}$0$, & if & $N(Y_x)-N(Y_x-1)=1$,\\ $B_\alpha$, & if & $N(Y_x)-N(Y_x-1)=2$ and $x=\alpha=\inf\{y:N(Y_y)=N(Y_x)\}$,\\
$1-B_\alpha$, & if & $N(Y_x)-N(Y_x-1)=2$ and $x\neq \alpha=\inf\{y:N(Y_y)=N(Y_x)\}$.\end{tabular}\right.
\]
Then, we define
\begin{equation}
\sigma(x)=N(Y_x)-L_x.
\end{equation}

The random variable $L_x$  works as an ordering correction here: it does nothing when $x$ learn his last information alone, but when a pair of sites learn their last informations together, then it subtracts 1 from the rank of one of these sites.

The main reason for this construction is that, by symmetry, each site has the same chance to be in any position. So we have a uniform distribution on the set of possibles permutations of sites, i.e., $\pr[\sigma(x)=k]=\frac{1}{n}$ for any $x\in V$ and $k \in [1,n]\cap \mathbb{N}$.

Now we represent the total propagation time as $\tau_V=Y_x+S_x$, where $S_x$ represents the remaining time until every site learn every information after the time $Y_x$. 
 From
Lemma~\ref{lemmareverse} we know the expectation of $Y_x$, let us then focus on $S_x$. By using our ordering and some fixed $x$ we have (remembering that $\ex[S_x\mid\sigma(x)=n]=0$):
\begin{align}
\nonumber \ex[S_x]&=\sum \limits_{k=1}^n \ex[S_x\mid\sigma(x)=k]\cdot \pr[\sigma(x)=k]\\
\label{Sx} &=\frac{1}{n}\sum \limits_{k=1}^{n-1} \ex[S_x\mid\sigma(x)=k]
\end{align}

Observe also that when $\sigma_x=k$ we have at least $k$ sites that know all informations, so if we remove all informations from all other sites except those, we would have~$k$ sites with all informations and each other site with no information. As all informations now propagate together as one, we can couple this situation to the propagation process of one information at a time when $k$ sites know a particular information. 

Since we erased some informations from some sites but did not reduce the number of informations (because the $k$ unaltered sites have all informations), it will take longer for them to propagate and so we can majorize  $\ex[S_x\mid\sigma_x=k]$ by the expected time in this altered situation and using the decomposition $\eqref{decom}$:
\begin{align*}
\ex[S_x\mid\sigma(x)=k] \leq& \sum \limits_{j=k}^{n-1} \ex\left[\Delta_j\right]\\
&=\frac{n-1}{2}\sum \limits_{j=k}^{n-1} \left[\frac{1}{j}+\frac{1}{n-j}\right]\\
&= \frac{n-1}{2}\left(H(n-1)-H(k-1)+H(n-k)\right)\\
&=\frac{M_n(1)}{2} + \frac{n-
1}{2}\left(H(n-k) -H(k-1) \right).
\end{align*}
Then, turning back to \eqref{Sx}, and using the fact that $H(0)=0$ we get
\begin{align*}
\ex[S_x] \leq& \frac{1}{n} \sum \limits_{k=1}^{n-1} \left[ \frac{M_n(1)}{2}+ \frac{n-1}{2}\left(H(n-k) -H(k-1) \right) \right]\\
&= \frac{(n-1)M_n(1)}{2n} +\frac{n-1}{2n} \Big(\sum \limits_{k=1}^{n-1} H(k) - \sum \limits_{k=1}^{n-1} H(k-1) \Big)\\
&= \frac{(n-1)M_n(1)}{2n} +\frac{n-1}{2n}H(n-1)\\
&=\frac{M_n(1)}{2}.
\end{align*}

Finally, together with Lemma~\ref{lemmareverse} we get:
\begin{align*}
M_n(n)=&\ex F_x+\ex S_x\\
\leq& M_n(1)+\frac{M_n(1)}{2}=\frac{3}{2}M_n(1),
\end{align*}
 and so we have the upper bound.
\end{proof}

Now we can conclude the proof of the main result. It comes easily by the previously stated lemmas. 

\begin{proof}[Proof of Theorem~\ref{mainteo}]

Lemmas~\ref{lower} and \ref{upper} give that
\[
\frac{3}{2}M_n(1)-\frac{3}{4}(n-1) \leq M_n(n) \leq \frac{3}{2}M_n(1).
\]
Dividing both sides by $M_n(1)$ we have

\[
\frac{3}{2}-\frac{3(n-1)}{4M_n(1)} \leq \frac{M_n(n)}{M_n(1)} \leq \frac{3}{2},
\]
and by Proposition~\ref{prop1}
\[
\frac{3(n-1)}{4M_n(1)}=\frac{3}{4H(n-1)}\sim \frac{3}{4 \ln n} \to 0, \quad \mbox{as $n \to \infty$},
\]
which concludes the proof.
\end{proof}

\subsection{Proof of Theorem~\ref{teoG}}

\begin{proof}[Proof of Theorem~\ref{teoG}]
Similarly to the proof of Lemma \ref{upper}, we can represent the total propagation time as $\tau_V=Y_x+S_x$, where $Y_x=\inf\{t\geq 1 \mid i_x(t)=I_n\}$ is the first time when $x$ knows all informations, and $S_x$ represents the remaining time after $t_x$ until every other site knows all informations. By Lemma \ref{lemmareverse} we have $\ex[t_x]=\ex[\tau_x]$ .

 Now observe that after time $Y_x$ the site $x$ knows all informations (together with another site, but we will not use this site), so if we consider all these informations together as one and ignore other informations in other sites, the time it takes for the propagation of them together is the same in law as $\tau_x$. As these alterations can only delay more the propagation, then if we couple the original propagation to this alteration we have 
\[
\ex[S_x] \leq \ex[\tau_x];
\] 
taking expectations we obtain
\begin{align*}
\ex[\tau_V]&=\ex[F_x]+\ex[S_x]\\
&\leq\ex[\tau_x]+\ex[\tau_x]\\
&=2\ex[\tau_x],
\end{align*}
which concludes the proof.
\end{proof}

Theorem~\ref{teoG} together with Proposition~\ref{propS} shows that we have a tight upper bound of $2$ for the propagation ratio of general graphs.

\bibliography{refe}

\end{document}